\title{Reconstructing currents from their projections}
\author{Aidan Backus}
\address{Department of Mathematics, Brown University}
\email{aidan\_backus@brown.edu}
\date{\today}
\thanks{This research was supported by the National Science Foundation's Graduate Research Fellowship Program under Grant No. DGE-2040433.}
\subjclass[2020]{44A12}
\newcommand{\NN}{\mathbf{N}}
\newcommand{\RR}{\mathbf{R}}
\newcommand{\Sph}{\mathbf S}
\newcommand*\dif{\mathop{}\!\mathrm{d}}
\DeclareMathOperator{\dist}{dist}
\newcommand{\vol}{\mathrm{vol}}
\newcommand{\diam}{\mathrm{diam}}
\newcommand{\dfn}[1]{\emph{#1}\index{#1}}
\newtheorem{theorem}{Theorem}
\newtheorem{lemma}[theorem]{Lemma}
\theoremstyle{definition}
\newtheorem{definition}[theorem]{Definition}
\def\XXint#1#2#3{{\setbox0=\hbox{$#1{#2#3}{\int}$ }
\vcenter{\hbox{$#2#3$ }}\kern-.6\wd0}}
\begin{document}
\begin{abstract}
We prove an inversion formula for the exterior $k$-plane transform.
As a consequence, we show that if $m < k$ then an $m$-current in $\RR^n$ can be reconstructed from its projections onto $\RR^k$, which proves a conjecture of Solomon.
\end{abstract}

\maketitle

A basic problem in analysis and geometry is to reconstruct an object in euclidean space from its orthogonal projections.
Here we are interested in reconstructing an oriented submanifold, or more generally a current, from its projections.
The geometry needed to prove our main theorem was worked out by Solomon \cite{Solomon11}, who conjectured that the analysis would work out as well.
In this brief note, we prove that conjecture.

Throughout, fix integers $0 \leq m < k < n$.
A \dfn{compactly supported $m$-current} is a continuous linear functional on the space of smooth $m$-forms \cite[\S6.2]{simon1984lectures}.
If $T$ is a compactly supported $m$-current and $\alpha$ is a smooth $m$-form, we write $\int_T \alpha$ for $T(\alpha)$.
Thus every compact, oriented, $C^1$ submanifold of $\RR^n$ is a compactly supported $m$-current.

Let $G$ be the Grassmannian variety of $k$-dimensional linear subspaces of $\RR^n$.
There is a natural $O(n)$-action on $G$, which induces a unique $O(n)$-invariant Borel probability measure on $G$.
For any $P \in G$, we have two operations:
\begin{enumerate}
\item Given an $m$-form $\alpha$ on $P$, the \dfn{pullback} $P^* \alpha$ is the $m$-form on $\RR^n$ which is the pullback of $\alpha$ by the orthogonal projection, $\RR^n \to P$.
\item Given a compactly supported $m$-current $T$ on $\RR^n$, the \dfn{pushforward} $P_* T$ is the compactly supported $m$-current on $P$ such that for every $m$-form $\alpha$ on $P$,
$$\int_{P_* T} \alpha = \int_T P^* \alpha.$$
\end{enumerate}
It is very important to work in the category of compactly supported currents for the pushforward by an orthogonal projection to make sense.
Indeed, one can only push forward a noncompactly supported current by a proper map, which orthogonal projections are not; moreover, it does not really make sense to make sense to push forward a submanifold of $\RR^n$.

We take the convention $0 \in \NN$.
Let $\langle x\rangle := \sqrt{1 + |x|^2}$ be the Japanese bracket.
By $A \lesssim_k B$ we mean that there exists $C > 0$ (which depends on $k$) for every $A, B$, $A \leq CB$.
The \dfn{Schwartz space} is the Fr\'echet space $\mathscr S$ of $m$-forms $\alpha$ such that for every $N \in \NN$, $|\alpha(x)| \lesssim_N \langle x\rangle^{-N}$ \cite[\S7.1]{hörmander2015analysis}.
A density argument shows that a compactly supported $m$-current is determined by its action on $\mathscr S$.

Our main theorem establishes \cite[Conjectures 6.5 and 6.8]{Solomon11}:

\begin{theorem}\label{thm: decomposition of currents}
For every $\alpha \in \mathscr S$ and $P \in G$, there exists a smooth $m$-form $\alpha_P$ on $P$ such that for every $x \in \RR^n$,
\begin{equation}\label{eqn: decomposition of forms}
\alpha(x) = \int_{G} P^*(\alpha_P)(x) \dif P,
\end{equation}
and for every compactly supported $m$-current $T$ on $\RR^n$, 
\begin{equation}\label{eqn: decomposition of currents}
\int_T \alpha = \int_{G} \int_{P_* T} \alpha_P \dif P.
\end{equation}
\end{theorem}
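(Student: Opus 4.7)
The approach is Fourier-analytic: I would define $\alpha_P$ via its Fourier transform on $P$, verify (\ref{eqn: decomposition of forms}) using a slice integration formula, and then deduce (\ref{eqn: decomposition of currents}) from (\ref{eqn: decomposition of forms}) by pairing against $T$. The starting point is
\[
\int_{\RR^n} f(\xi) \dif\xi \;=\; \frac{|\Sph^{n-1}|}{|\Sph^{k-1}|} \int_G \int_P |\xi|^{n-k} f(\xi) \dif\xi_P \dif P, \qquad f \in \mathscr S(\RR^n),
\]
which follows from polar coordinates together with the $\Orth(n)$-invariant disintegration of $\Sph^{n-1}$ via the sub-spheres $\Sph^{n-1} \cap P$. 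Applied to Fourier inversion for $\alpha$ and using $x \cdot \xi = \pi_P(x)\cdot\xi$ when $\xi \in P$, this expresses $\alpha(x)$ as an average over $G$ of slice integrals on $P$; the snag is that each slice integrand takes values in $\Lambda^m(\RR^n)^*$ rather than in $\Lambda^m P^*$, so it is not literally the inverse Fourier transform of an $m$-form on $P$.

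To resolve this, fix $\xi \neq 0$ and let $\pi_P^\flat\colon \Lambda^m(\RR^n)^* \to \Lambda^m P^*$ be the orthogonal projection. The direct sum decomposition
\[
\Lambda^m(\RR^n)^* \;=\; \Lambda^m(\xi^\perp)^* \;\oplus\; \hat\xi^\flat \wedge \Lambda^{m-1}(\xi^\perp)^*, \qquad \hat\xi := \xi/|\xi|,
\]
splits $\Lambda^m(\RR^n)^*$ into irreducible modules for the stabilizer $\Orth(n-1)$ of $\hat\xi$, and for $P \ni \xi$ the composition $\iota_P \pi_P^\flat$ (i.e., orthogonal projection onto $\Lambda^m P^* \subset \Lambda^m(\RR^n)^*$) preserves the splitting. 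Averaging over the $k$-planes $P$ containing $\xi$ (equivalently, over $(k-1)$-planes in $\xi^\perp \cong \RR^{n-1}$), Schur's lemma forces the average to be a scalar $\lambda_i$ on each summand, and a trace computation yields $\lambda_1 = \binom{k-1}{m}/\binom{n-1}{m}$ and $\lambda_2 = \binom{k-1}{m-1}/\binom{n-1}{m-1}$, both strictly positive since $m \leq k-1$ by the hypothesis $m < k$.

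Accordingly, for $\xi \in P$ I decompose $\hat\alpha(\xi) = A_1(\xi) + \hat\xi^\flat \wedge A_2(\xi)$ per the above and define
\[
\widehat{\alpha_P}(\xi) \;=\; \frac{|\Sph^{n-1}|}{|\Sph^{k-1}|}\, |\xi|^{n-k} \Bigl( \lambda_1^{-1} \pi_P^\flat A_1(\xi) + \lambda_2^{-1} \pi_P^\flat \bigl(\hat\xi^\flat \wedge A_2(\xi)\bigr) \Bigr) \in \Lambda^m P^*,
\]
taking $\alpha_P$ to be its inverse Fourier transform on $P$; smoothness of $\alpha_P$ follows from $\hat\alpha$ being Schwartz and $|\xi|^{n-k}$ vanishing to positive order at the origin. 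Substituting this $\alpha_P$ into $\int_G \pi_P^*\alpha_P(x)\dif P$ and combining the slice formula with the Schur computation, the $\lambda_i^{-1}$ factors cancel against $\lambda_i$ and the expression collapses to $\int_{\RR^n} \hat\alpha(\xi) e^{2\pi i x\cdot\xi}\dif\xi = \alpha(x)$, establishing (\ref{eqn: decomposition of forms}).

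Then (\ref{eqn: decomposition of currents}) follows by pairing (\ref{eqn: decomposition of forms}) against $T$ and applying Fubini, justified by the compact support of $T$ together with continuity of $P \mapsto \alpha_P$ into $C^\infty$; the defining identity $\int_T \pi_P^*\alpha_P = \int_{P_*T}\alpha_P$ of pushforward then concludes. The main technical obstacle is the Schur computation in paragraph two: $\lambda_1 \neq \lambda_2$ whenever $k < n$, so the naive candidate $\widehat{\alpha_P}(\xi) = c |\xi|^{n-k} \pi_P^\flat \hat\alpha(\xi)$ would reconstruct $\alpha$ only up to an anisotropic Fourier multiplier, and the separate weighting of the two summands above is essential for recovering $\alpha$ exactly.
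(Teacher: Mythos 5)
Your proposal is correct in substance but takes a genuinely different route from the paper. The paper never constructs $\alpha_P$ directly on the Fourier side: it takes Solomon's operator identity $\alpha = \mathcal Q\mathcal R^*\mathcal R\alpha$ as a black box, extends it by density to the completion $X$ of $\mathscr S$ (Theorem \ref{thm: Radon inversion formula}), proves the physical-space decay $|\nabla^s\mathcal Q\alpha(x)|\lesssim\langle x\rangle^{-n}$ so that $\beta:=\mathcal R\mathcal Q\alpha$ is defined and H\"older in $P$ (Lemmas \ref{lma: decaying forms are good} and \ref{lma: sufficient decay}), and then sets $\alpha_P$ to be the restriction of $\beta(P,\cdot)|_P$, so that $\alpha=\mathcal R^*\beta$ yields (\ref{eqn: decomposition of forms}). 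You instead build $\widehat{\alpha_P}$ by hand from the slice-integration formula together with the Schur/trace computation, in effect re-deriving the inversion formula rather than citing it; as a consistency check, your ratio $\lambda_1^{-1}:\lambda_2^{-1}=(k-m)^{-1}:(n-m)^{-1}$ reproduces exactly the relative weights of $\Pi(\xi)$ and $\Psi(\xi)$ in the paper's symbol $h$. What your route buys is an explicit, absolutely convergent Fourier representation of $P^*(\alpha_P)$, which makes the smoothness of $\alpha_P$ and its dependence on $P$ transparent and lets you bypass the space $X$ and the integration-by-parts estimates of Lemma \ref{lma: sufficient decay} entirely; what the paper's route buys is that the representation-theoretic work is outsourced to Solomon.

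The one step you should not wave at is the passage from (\ref{eqn: decomposition of forms}) to (\ref{eqn: decomposition of currents}): this is not literally Fubini, because a compactly supported current is a distribution-type functional and need not be represented by a measure. What is actually required is that $\int_G P^*(\alpha_P)\dif P$ converge to $\alpha$ in the $C^\infty$ topology (e.g.\ as a limit of Riemann sums), so that the continuous functional $T$ can be interchanged with the $G$-integral; this is precisely what the paper's estimate (\ref{eqn: derivatives of the Radon transform}) and the Riemann-sum argument at the end of its proof deliver. You do name the right ingredient --- continuity of $P\mapsto P^*(\alpha_P)$ into $C^\infty$ --- and with your explicit formula it follows by dominated convergence after rotating each $P$ to a fixed $P_0$, but it must be stated and proved as an interchange of a $C^\infty$-valued integral with a continuous linear functional, not invoked as Fubini.
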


In order to decompose $\alpha$ into $\alpha_P$s, Solomon constructed a Radon-like transform \cite[\S3]{Solomon11}.
Let $\Gamma \to G$ be the \dfn{tautological bundle}, the vector bundle whose fiber over $P \in G$ is $P$ itself.
For a vector space $Q$, let $\Lambda^m(Q)$ be the $m$th exterior power of $Q$, so that $m$-forms on $Q$ are mappings $Q \to \Lambda^m(Q)$.
For any $P \in G$, the inclusion map $P \to \RR^n$ induces a linear map $\Lambda^m(P) \to \Lambda^m(\RR^n)$, and we denote its adjoint by 
\begin{align*}
\Lambda^m(\RR^n) &\to \Lambda^m(P) \\
v &\mapsto v|_P.
\end{align*}

\begin{definition}[{\cite[Definition 3.2]{Solomon11}}]
The \dfn{exterior $k$-plane transform} of an $m$-form $\alpha$ is the mapping $\Gamma \to \Lambda^m(\RR^n)$ defined by
\begin{equation}\label{eqn: exterior plane transform}
\mathcal R \alpha(P, x) := \int_{P^\perp} \alpha(x + y)|_P \dif y.
\end{equation}
\end{definition}

In (\ref{eqn: exterior plane transform}), $\int_{P^\perp} \alpha(x + y)|_P \dif y$ is not the integral in the sense of differential forms (indeed, it cannot be, since $\alpha|_P$ is valued in $\Lambda^m(P)$ but we are integrating it over $P^\perp$).
Rather, it is an $\Lambda^m(P)$-valued integral.
Moreover, (\ref{eqn: exterior plane transform}) only makes sense when for almost every $P$, $\alpha|_P$ is integrable on $P^\perp$, so $\mathcal R$ is ill-defined in $L^p$ when $p$ is large \cite{Christ84}.
We study $\mathcal R$ on the completion $X$ of $\mathscr S$ under the norm
$$\|\alpha\|_X^2 := \|\alpha\|_{L^2}^2 + \|\mathcal R\alpha\|_{L^2(\Gamma)}^2.$$

To prove Theorem \ref{thm: decomposition of currents}, we must establish the inversion formula for $\mathcal R$, so now we recall Solomon's construction of the inverse of $\mathcal R$.
Let $\mathcal F$ denote the Fourier transform, normalized
$$\mathcal Ff(\xi) := \int_{\RR^n} e^{-2\pi ix\cdot \xi} f(x) \dif x.$$
For each $\xi \in \RR^n \setminus \{0\}$, let $H_\xi$ be the hyperplane orthogonal to $\xi$, and let
$$\Pi(\xi): \Lambda^m(\RR^n) \to \Lambda^m(H_\xi)$$
be the orthogonal projection.
Let $\Psi(\xi)$ be the orthogonal projection onto the orthocomplement of $\Lambda^m(H_\xi)$ in $\Lambda^m(\RR^n)$, so that $\Pi(\xi) + \Psi(\xi) = I$, and introduce the $0$-homogeneous symbol
\begin{align*}
h: \RR^n \setminus \{0\} &\to GL(\Lambda^m(\RR^n)) \\
\xi &\mapsto \frac{k \vol(\Sph^{n - 1}) \binom{n - 1}{m}}{\vol(\Sph^{k - 1}) \binom{n - k}{m}} \left[\frac{\Pi(\xi)}{k - m} + \frac{\Psi(\xi)}{n - m}\right]
\end{align*}
and the Fourier multiplier,
$$\mathcal Q \gamma(x) := \int_{\RR^n} e^{2\pi ix \cdot \xi} |\xi|^{n - k} h(\xi) \mathcal F\gamma(\xi) \dif \xi.$$

\begin{lemma}[{\cite[\S3.2]{Solomon11}}]\label{lma: properties of dual transform}
The formal adjoint of $\mathcal R$, applied to a mapping $\beta: \Gamma \to \Lambda^m(\RR^n)$, is 
$$\mathcal R^* \beta(x) = \int_{G} \beta(P, x_P)|_P \dif P,$$
where $x_P$ is the orthogonal projection of $x$ onto $P$.
The restriction, $\alpha_P$, of $\beta(P, x_P)|_P$ to $P$ is an $m$-form on $P$ such that 
$$P^*(\alpha_P)(x) = \beta(P, x_P)|_P.$$
\end{lemma}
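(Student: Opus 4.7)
The plan is to verify both assertions by direct unwinding of the definitions; the only analytic ingredient is Fubini's theorem, which is unproblematic on $\mathscr S$.

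First, for the adjoint formula, I would begin from the $L^2$-pairing on $\Gamma$. Writing this pairing as a fiberwise integral over $P$, then substituting (\ref{eqn: exterior plane transform}), gives
\[
\langle \mathcal R\alpha, \beta\rangle = \int_{G} \int_{P} \int_{P^\perp} \langle \alpha(x + y)|_P, \beta(P, x)\rangle \dif y \dif x \dif P.
\]
I would then change variables via $(x, y) \mapsto z := x + y$, which identifies $P \times P^\perp$ isometrically with $\RR^n$ and sends $x \mapsto z_P$. Since $v \mapsto v|_P$ is by construction the adjoint of the inclusion $\Lambda^m(P) \hookrightarrow \Lambda^m(\RR^n)$, I can move the $|_P$ from the first slot of the inner product to the second, obtaining the integrand $\langle \alpha(z), \beta(P, z_P)|_P\rangle$. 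A final application of Fubini to swap the $G$- and $\RR^n$-integrals reads off the claimed formula.

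For the identity $P^*(\alpha_P)(x) = \beta(P, x_P)|_P$, let $\pi_P: \RR^n \to P$ denote the orthogonal projection. Because $\pi_P$ is linear, its differential at every point coincides with $\pi_P$ itself, so for any orthonormal basis $e_1, \ldots, e_k$ of $P$ one has $\pi_P^*\dif e_i = \dif e_i$. It follows that, for any $m$-form $\omega$ on $P$, the pullback $\pi_P^*\omega$ evaluated at $x \in \RR^n$ equals $\omega(x_P)$, viewed as an element of $\Lambda^m(\RR^n)$ via the inclusion $\Lambda^m(P) \hookrightarrow \Lambda^m(\RR^n)$. Applying this with $\omega = \alpha_P$, whose value at $y \in P$ is by construction $\beta(P, y)|_P$, yields exactly $P^*(\alpha_P)(x) = \beta(P, x_P)|_P$ as claimed.

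I do not foresee a serious obstacle: the content of the lemma is really the correct identification of the operation $\cdot|_P$ with the adjoint of the inclusion $\Lambda^m(P) \hookrightarrow \Lambda^m(\RR^n)$, after which both parts reduce to tracking linear algebra through elementary integral and pullback manipulations.
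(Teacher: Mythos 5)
Your proof is correct. Note that the paper itself offers no proof of this lemma---it is imported wholesale from \S3.2 of Solomon's paper---so there is nothing internal to compare against; your verification (the $L^2(\Gamma)$ pairing, Fubini, the measure-preserving identification $P \times P^\perp \cong \RR^n$, and the adjoint/pullback bookkeeping for $\cdot|_P$) is the standard computation and is exactly what the cited source carries out. One small point of hygiene: in the integrand $\langle \alpha(x+y)|_P, \beta(P,x)\rangle$ the first slot lives in $\Lambda^m(P)$ and the second in $\Lambda^m(\RR^n)$, so to ``move $|_P$ across'' you should first re-embed $\alpha(x+y)|_P$ into $\Lambda^m(\RR^n)$ via the inclusion $\iota$; the step you are actually using is that $\iota\iota^*$ is an orthogonal projection and hence self-adjoint, which is the same identification of $\cdot|_P$ with $\iota^*$ that you invoke, just stated with the types matched.
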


\begin{theorem}[inversion formula]\label{thm: Radon inversion formula}
For every $\alpha \in X$,
\begin{equation}\label{eqn: Radon inversion formula}
\alpha = \mathcal Q \mathcal R^* \mathcal R\alpha.
\end{equation}
\end{theorem}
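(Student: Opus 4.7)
The plan is to compute $\mathcal R^* \mathcal R$ as a Fourier multiplier and recognize $\mathcal Q$ as its fiberwise inverse. By density of $\mathscr S$ in $X$ and $L^2$-continuity of $\mathcal R$, it suffices to prove (\ref{eqn: Radon inversion formula}) for $\alpha \in \mathscr S$. The starting point is the \emph{Fourier slice identity}: for $P \in G$ and $\xi \in P$,
$$\mathcal F_P\bigl(\mathcal R\alpha(P, \cdot)\bigr)(\xi) = \mathcal F\alpha(\xi)\big|_P,$$
where $\mathcal F_P$ denotes the Fourier transform on $P$. This follows immediately from (\ref{eqn: exterior plane transform}) and Fubini, using that $y \cdot \xi = 0$ for $y \in P^\perp$ when $\xi \in P$, and that the restriction $|_P$ is linear.

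Next I would combine the slice identity with Lemma \ref{lma: properties of dual transform} to compute $\mathcal F(\mathcal R^* \mathcal R \alpha)$. Writing $\mathcal R^* \mathcal R \alpha(x) = \int_G \mathcal R \alpha(P, x_P)\big|_P \dif P$ and splitting each $x$-integral along the orthogonal decomposition $x = x_P + x_{P^\perp}$, the slice identity yields the formal identity
$$\mathcal F(\mathcal R^* \mathcal R \alpha)(\xi) = \int_G \delta_{P^\perp}(\xi_{P^\perp})\, \mathcal F\alpha(\xi)\big|_P \dif P.$$
The distribution $\delta_{P^\perp}(\xi_{P^\perp}) \dif P$ is supported on the subvariety $G_\xi := \{P \in G : \xi \in P\}$; by $O(n)$-invariance and homogeneity of degree $-(n-k)$, it coincides with $c_{n,k} |\xi|^{-(n-k)}$ times the invariant probability measure on $G_\xi$, for a constant $c_{n,k}$ that can be pinned down by testing the kernel against a constant function using a coarea-type computation of the Jacobian of $P \mapsto \xi_{P^\perp}$ transverse to $G_\xi$.

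Schur's lemma then finishes the multiplier calculation. The averaged endomorphism
$$A(\xi)v := \int_{G_\xi} v\big|_P \dif P$$
of $\Lambda^m(\RR^n)$ is $O(H_\xi)$-equivariant; since $\Lambda^m(H_\xi)$ and its orthocomplement $\xi \wedge \Lambda^{m-1}(H_\xi)$ are irreducible $O(H_\xi)$-representations, $A(\xi) = \lambda_1 \Pi(\xi) + \lambda_2 \Psi(\xi)$, with scalars $\lambda_1, \lambda_2$ recovered from $\tr A(\xi)$ and $\tr(A(\xi)\Pi(\xi))$; each of these traces reduces to counting, for $P \in G_\xi$, how many members of an orthonormal basis of $\Lambda^m(P)$ lie in $\Lambda^m(H_\xi)$ versus its orthocomplement, giving closed-form binomial expressions. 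Putting everything together, $\mathcal F(\mathcal R^*\mathcal R\alpha)(\xi) = c_{n,k}|\xi|^{-(n-k)} (\lambda_1\Pi(\xi) + \lambda_2\Psi(\xi)) \mathcal F\alpha(\xi)$, and the combinatorial prefactor in Solomon's definition of $h$ is engineered so that $|\xi|^{n-k} h(\xi)$ is exactly the pointwise inverse of this symbol. This gives $\mathcal Q \mathcal R^* \mathcal R \alpha = \alpha$ on $\mathscr S$, which extends to $X$ by passing to the limit in $\mathscr S'$.

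The obstacle is not the strategy but the bookkeeping of constants. One must pin down $c_{n,k}$ from the geometry of the incidence variety $\{(P,\xi) : \xi \in P\} \to G$, evaluate $\lambda_1$ and $\lambda_2$ in closed form, and then verify the crucial algebraic identity $\lambda_1/(k-m) = \lambda_2/(n-m)$ that makes the symbol decomposition of $h$ compatible with $A(\xi)$; all of these combine so that the ratios of sphere volumes and binomial coefficients in $h$ cancel exactly against $c_{n,k}$, $\lambda_1$, and $\lambda_2$. This final numerical verification is where the analysis delivers on the geometric prediction of \cite{Solomon11}.
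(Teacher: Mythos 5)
Your proposal spends nearly all of its effort re-deriving the Schwartz-class case of (\ref{eqn: Radon inversion formula}) --- the Fourier slice identity, the identification of $\mathcal R^*\mathcal R$ as the multiplier $c_{n,k}|\xi|^{-(n-k)}(\lambda_1\Pi(\xi)+\lambda_2\Psi(\xi))$ via Schur's lemma on the $O(H_\xi)$-decomposition $\Lambda^m(\RR^n)=\Lambda^m(H_\xi)\oplus\hat\xi\wedge\Lambda^{m-1}(H_\xi)$, and the matching of constants against $h$. That outline is sound (one should also check that the two summands are inequivalent $O(H_\xi)$-representations, including when $n=2m$, where a character computation at $-I$ settles it), but it is exactly the content of \cite[Theorem 6.1]{Solomon11}, which the paper simply cites; and as you concede, you have not actually pinned down $c_{n,k}$, $\lambda_1$, $\lambda_2$, so this part of your argument is an honest sketch rather than a proof.

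The genuine gap is in the one step that constitutes the paper's actual proof: the passage from $\mathscr S$ to $X$. You assert that ``density of $\mathscr S$ in $X$ and $L^2$-continuity of $\mathcal R$'' reduce the claim to $\alpha\in\mathscr S$, but this reduction requires $\mathcal Q\mathcal R^*$ to be bounded from $L^2(\Gamma)$ to $L^2(\RR^n)$, and it is not: your own multiplier computation shows $\|\mathcal R\gamma\|_{L^2(\Gamma)}^2\sim\|\gamma\|_{\dot H^{-(n-k)/2}}^2$, so by duality $\mathcal Q\mathcal R^*$ loses $(n-k)/2$ derivatives and knowing only that $\mathcal R\alpha_j\to\mathcal R\alpha$ in $L^2(\Gamma)$ does not let you pass to the limit in $\mathcal Q\mathcal R^*\mathcal R\alpha_j$. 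Your closing phrase ``extends to $X$ by passing to the limit in $\mathscr S'$'' is also problematic as stated, since the symbol $|\xi|^{n-k}h(\xi)$ is not smooth at the origin and $\mathcal Q$ does not act on all of $\mathscr S'$. The paper's resolution is to exploit the composite structure: the Schwartz-class identity gives $|\xi|^{n-k}\mathcal F(\mathcal R^*\mathcal R\alpha)=h(\xi)^{-1}\mathcal F\alpha$, hence the bound (\ref{eqn: dual Radon transform bounded}), $\|\mathcal R^*\mathcal R\alpha\|_{\dot H^{n-k}}\lesssim\|\alpha\|_{L^2}$, while the $0$-homogeneity of $h$ makes $\mathcal Q$ bounded from $\dot H^{n-k}$ to $L^2$; the composition $\mathcal Q\mathcal R^*\mathcal R$ is therefore $L^2$-bounded and the limit goes through. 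You should isolate and prove this quantitative continuity statement; without it the extension to $X$ does not follow.
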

\begin{proof}
By \cite[Theorem 6.1]{Solomon11}, (\ref{eqn: Radon inversion formula}) holds if $\alpha \in \mathscr S$.
Let $\dot H^s$ be the homogeneous Sobolev space of order $s$.
We claim that for every $\alpha \in X$, 
\begin{equation}\label{eqn: dual Radon transform bounded}
\|\mathcal R^* \mathcal R \alpha\|_{\dot H^{n - k}} \lesssim \|\alpha\|_{L^2}.
\end{equation}
It is enough to prove this when $\alpha \in \mathscr S$, so that (\ref{eqn: Radon inversion formula}) holds; thus, since $h(\xi)$ is invertible,
$$|\xi|^{n - k} \mathcal F(\mathcal R^* \mathcal R\alpha)(\xi) = h(\xi)^{-1} \mathcal F \alpha(\xi).$$
Thus (\ref{eqn: dual Radon transform bounded}) follows from Plancherel's theorem.

To prove (\ref{eqn: Radon inversion formula}), let $\alpha \in X$ and let $\alpha_n \in \mathscr S$ satisfy $\alpha_n \to \alpha$ in $X$.
Since $h$ is $0$-homogeneous, $\mathcal Q$ maps $\dot H^{n - k}$ boundedly into $L^2$, so since (\ref{eqn: Radon inversion formula}) holds for $\alpha_n$,
\begin{align*}
\|\mathcal Q \mathcal R^* \mathcal R \alpha - \alpha\|_{L^2} 
&\leq \|\mathcal Q \mathcal R^* \mathcal R (\alpha_n - \alpha)\|_{L^2} + \|\alpha_n - \alpha\|_{L^2} \\
&\lesssim \|\mathcal R^* \mathcal R (\alpha_n - \alpha)\|_{\dot H^{n - k}} + \|\alpha_n - \alpha\|_{L^2} \\
&\lesssim \|\alpha_n - \alpha\|_{L^2}. \qedhere
\end{align*}
\end{proof}

\begin{lemma}\label{lma: decaying forms are good}
Suppose that $\alpha$ is a measurable $m$-form such that $|\alpha(x)| \lesssim \langle x\rangle^{-n}$.
Then $\alpha \in X$.
\end{lemma}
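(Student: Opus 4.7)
The plan is to verify directly that $\alpha \in L^2(\RR^n)$ and $\mathcal R \alpha \in L^2(\Gamma)$, and then to produce a sequence of Schwartz $m$-forms converging to $\alpha$ in the $X$-norm. That $\alpha \in L^2$ is immediate, since $|\alpha|^2 \lesssim \langle x\rangle^{-2n}$ is integrable on $\RR^n$.

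For $\mathcal R\alpha$, I fix $P \in G$ and $x \in P$ and use that every $y \in P^\perp$ is orthogonal to $x$, so $\langle x+y\rangle^2 = 1 + |x|^2 + |y|^2$. A polar-coordinate computation on $P^\perp$ (which has dimension $n - k$) then yields the pointwise estimate
$$|\mathcal R \alpha(P, x)| \lesssim \int_{P^\perp} \langle x + y\rangle^{-n} \dif y \lesssim \langle x\rangle^{-k}.$$
Since $\langle x\rangle^{-2k}$ is integrable on $P \cong \RR^k$ and $G$ has finite total measure, integrating gives $\mathcal R\alpha \in L^2(\Gamma)$.

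For the approximation step, I combine a mollifier $\phi_\epsilon$ supported in $\{|y| \le \epsilon\}$ with a smooth cutoff $\chi \in \mathscr S$ equal to $1$ on the unit ball, setting $\alpha_{R, \epsilon}(x) := \chi(x/R)(\alpha * \phi_\epsilon)(x)$. Each $\alpha_{R, \epsilon}$ is compactly supported and smooth, and converges pointwise a.e.\ to $\alpha$ as $R \to \infty$ and $\epsilon \to 0$. The critical observation is that the decay hypothesis passes to the approximants uniformly: $|\alpha_{R, \epsilon}(x)| \lesssim \langle x\rangle^{-n}$ with a constant independent of $R$ and $\epsilon$, so the previous computation produces the uniform envelope $\langle x\rangle^{-k} \in L^2(P)$ for $\mathcal R \alpha_{R, \epsilon}$ as well. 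Two applications of dominated convergence — first against the $P^\perp$-integrable majorant $\langle x + \cdot\rangle^{-n}$ to obtain pointwise convergence $\mathcal R\alpha_{R, \epsilon}(P,x) \to \mathcal R \alpha(P,x)$, then against the $\Gamma$-square-integrable envelope $\langle x\rangle^{-k}$ to upgrade pointwise convergence to $L^2(\Gamma)$ convergence — complete the convergence $\alpha_{R,\epsilon} \to \alpha$ in $X$.

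The only place demanding any real thought is the polar estimate $\int_{P^\perp} \langle x+y\rangle^{-n} \dif y \lesssim \langle x\rangle^{-k}$; the decay rate $\langle x\rangle^{-n}$ in the hypothesis is exactly critical, in the sense that the $(n-k-1)$-dimensional spherical factor against $|y|^{-n}$ just barely integrates at infinity while simultaneously producing the scaling $\langle x\rangle^{-k}$ needed for square-integrability on $P$. Once this integral is in hand, the rest is routine mollification-and-cutoff bookkeeping.
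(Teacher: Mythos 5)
Your argument is correct and rests on the same two pillars as the paper's proof: approximants in $\mathscr S$ obeying the decay $|\cdot|\lesssim \langle x\rangle^{-n}$ \emph{uniformly}, and the geometric fact that for $x\in P$, $y\in P^\perp$ one has $\langle x+y\rangle^2 = 1+|x|^2+|y|^2$, so that the fiber integral of $\langle x+y\rangle^{-n}$ produces an $L^2(P)$ envelope. Where you genuinely diverge is the convergence mechanism for $\mathcal R\alpha_{R,\epsilon}\to\mathcal R\alpha$ in $L^2(\Gamma)$: the paper splits $P^\perp$ into $\{|y|>R\}$ and $\{|y|\le R\}$, makes the tail small uniformly via $\langle x+y\rangle^{-n}\le \langle x\rangle^{-k+1/4}\langle y\rangle^{-(n-k)-1/4}$, and controls the near part with Jensen's inequality against the quantitative rate $\|\alpha_j-\alpha\|_{L^2}\le 1/j$ --- so it needs only $L^2$ convergence of the approximants. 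Your route proves the pointwise envelope $|\mathcal R\alpha_{R,\epsilon}(P,x)|\lesssim\langle x\rangle^{-k}$ and applies dominated convergence twice, which is softer and avoids the $\varepsilon$--$R$ bookkeeping, but it requires pointwise a.e.\ convergence of $\alpha_{R,\epsilon}$ \emph{on the fibers} $x+P^\perp$, which are Lebesgue-null in $\RR^n$; you should say explicitly that a.e.\ convergence on $\RR^n$ (valid at Lebesgue points, hence a.e.) plus Fubini for the splitting $\RR^n = P\oplus P^\perp$ yields, for each $P$ and a.e.\ $x\in P$, a.e.\ convergence on $x+P^\perp$, which is what the inner dominated-convergence pass needs. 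Two small additional points: record that $\|\alpha_{R,\epsilon}-\alpha\|_{L^2(\RR^n)}\to 0$ as well (same domination with $\langle x\rangle^{-2n}$), since membership in the completion $X$ requires convergence in both components of the norm; and the exponent $n$ is not actually critical --- any decay $\langle x\rangle^{-s}$ with $s>n-k/2$ gives $\int_{P^\perp}\langle x+y\rangle^{-s}\dif y\lesssim\langle x\rangle^{-(s-n+k)}\in L^2(P)$ --- though this is only a remark and does not affect your proof.
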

\begin{proof}
Let $(\alpha_j)$ be a sequence in $\mathscr S$ such that $|\alpha_j(x)| \lesssim \langle x\rangle^{-n}$ uniformly in $j$, and $\|\alpha_j - \alpha\|_{L^2} \leq 1/j$.
Such a sequence can be constructed, for example, by taking a smooth cutoff of $\alpha$, then running the heat equation for a short time; this is routine, and we omit the details.
To show that $\alpha \in X$, we must show that $\|\mathcal R(\alpha_j - \alpha)\|_{L^2(\Gamma)} \to 0$.
Let $\varepsilon \in (0, 1/10]$ and $R := \varepsilon^{-4}$.
For every $x, y \in \RR^n$ such that $x \cdot y = 0$, $|x + y| \geq \max(|x|, |y|)$, so
$$\langle x + y\rangle^{-n} \leq \langle x\rangle^{-k + 1/4} \langle y\rangle^{-(n - k)-1/4};$$
we use this fact to estimate
\begin{align*}
&\int_G \int_P \left|\int_{P^\perp \cap \{|y| > R\}} (\alpha(x + y) - \alpha_j(x + y))|_P \dif y\right|^2 \dif x \dif P\\
&\qquad \lesssim \int_G \int_P \left|\int_{P^\perp \cap \{|y| > R\}} \langle x + y\rangle^{-n} \dif y\right|^2 \dif x \dif P\\
&\qquad \leq \int_G \int_P \langle x\rangle^{-2k + 1/2} \left|\int_{P^\perp \cap \{|y| > R\}} \langle y\rangle^{-(n - k)-1/4} \dif y\right|^2 \dif x \dif P \\
&\qquad \lesssim \int_R^\infty r^{-(n - k) - 1/4 + (n - k - 1)} \dif r \\
&\qquad \lesssim \varepsilon.
\end{align*}
Now we restrict to $j \geq R^{k/2}/\varepsilon^{1/2}$; then, by Jensen's inequality,
\begin{align*}
&\int_G \int_P \left|\int_{P^\perp \cap \{|y| \leq R\}} (\alpha(x + y) - \alpha_j(x + y))|_P \dif y\right|^2 \dif x \dif P\\
&\qquad \lesssim \int_G \int_P R^k \int_{P^\perp \cap \{|y| \leq R\}} |\alpha(x + y) - \alpha_j(x + y)|^2 \dif y \dif x \dif P \\
&\qquad \leq R^k \int_{\RR^n} |\alpha(z) - \alpha_j(z)|^2 \dif z \\
&\qquad \leq \varepsilon.
\end{align*}
Then $\|\mathcal R(\alpha_j - \alpha)\|_{L^2(\Gamma)} \lesssim \varepsilon$, as required.
\end{proof}

\begin{lemma}\label{lma: sufficient decay}
$\mathcal Q$ maps $\mathscr S$ boundedly into $X$.
Furthermore, for every $\alpha \in \mathscr S$, $\theta \in [0, 1)$, $s \in \NN$, $P, Q \in G$, and $x \in \RR^n$,
\begin{equation}\label{eqn: derivatives of the Radon transform}
\left|\frac{\partial^s}{\partial x^s} \mathcal R\mathcal Q\alpha(P, x_P) - \frac{\partial^s}{\partial x^s} \mathcal R\mathcal Q\alpha(Q, x_Q)\right| \lesssim_{\alpha, \theta, s} \dist(P, Q)^\theta.
\end{equation}
\end{lemma}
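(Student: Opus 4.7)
The plan is Fourier-analytic throughout. For the boundedness claim, I write the symbol of $\mathcal Q \alpha$ as $\phi(\xi) := |\xi|^{n-k} h(\xi) \mathcal F \alpha(\xi)$. Since $\mathcal F \alpha$ is Schwartz and $h$ is zero-homogeneous and smooth on $\RR^n \setminus \{0\}$, $\phi$ is smooth off the origin, vanishes to order $n-k$ at $0$, and decays rapidly at infinity. The multi-index derivative $\partial^\beta \phi$ then behaves like $|\xi|^{n-k-|\beta|}$ near $0$, so lies in $L^1(\RR^n)$ whenever $|\beta| < 2n - k$. The hypothesis $k < n$ pushes this threshold past $n$, so for $|\beta| = n$ the standard Fourier-decay estimate $|x|^n |\mathcal Q \alpha(x)| \lesssim \|\partial^n \phi\|_{L^1} \lesssim_\alpha 1$ holds, after regularizing the symbol near $0$ by a smooth cutoff and passing to the limit. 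This gives $|\mathcal Q \alpha(x)| \lesssim_\alpha \langle x\rangle^{-n}$, and Lemma~\ref{lma: decaying forms are good} then places $\mathcal Q \alpha$ in $X$ with norm controlled by finitely many Schwartz seminorms of $\alpha$.

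For the H\"older estimate, I would first pass to the Fourier side using the slice theorem, writing $\partial_x^s \mathcal R \mathcal Q \alpha(P, x_P) = \int_P e^{2\pi i x \cdot \xi} \psi_s(\xi)|_P \, d\xi$ with $\psi_s(\xi) := (2\pi i \xi)^s \phi(\xi)$ inheriting the structure of $\phi$: smooth off $0$, vanishing to order $n-k+s$ at $0$, and Schwartz at infinity. I would then split $\psi_s = \chi_\lambda \psi_s + (1 - \chi_\lambda) \psi_s$ using a smooth cutoff $\chi_\lambda$ supported on $\{|\xi| \leq \lambda\}$, with $\lambda = \dist(P, Q)^a$ for an exponent $a > 0$ to be chosen. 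The low-frequency piece is bounded uniformly in $x$ by the direct $L^1$ estimate $\int_0^\lambda r^{n-k+s+k-1}\, dr \lesssim \lambda^{n+s}$. For the high-frequency piece, I would parametrize $P = U P_0$ and $Q = V P_0$ for orthogonal $U, V$ with $\|U - V\| \lesssim \dist(P, Q)$, pull both integrals back to $P_0$, and expand the integrand difference into errors coming from (i) the Lipschitz continuity of $\psi_s$, (ii) the Lipschitz dependence of the restriction $|_P$ on $P$, and (iii) the difference of oscillatory factors $e^{2\pi i x \cdot U \eta} - e^{2\pi i x \cdot V \eta}$. Contributions (i) and (ii) give $\dist(P, Q)$ times the Schwartz tail of $\psi_s$; contribution (iii) is handled by interpolating its trivial bound $2$ against its Lipschitz bound $\lesssim |x| \dist(P, Q) |\eta|$ to obtain $\dist(P, Q)^\theta$ at the cost of absorbing $|\eta|^\theta$ into the Schwartz tail. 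Optimizing $a$ balances the low- and high-frequency contributions.

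The main obstacle is controlling the $|x|$-dependence arising from term (iii); the interpolation step must be coordinated carefully with the frequency cutoff and the Schwartz tail of $\psi_s$ to produce an estimate uniform in $x \in \RR^n$, and it is this interpolation that forces the bound to hold only in H\"older, rather than Lipschitz, form.
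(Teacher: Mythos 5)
Your first part --- the decay bound $|\mathcal Q\alpha(x)| \lesssim_\alpha \langle x\rangle^{-n}$ obtained by putting $n$ derivatives of the symbol $\phi(\xi) = |\xi|^{n-k} h(\xi) \mathcal F\alpha(\xi)$ into $L^1$, using $n - k \geq 1$ to absorb the singularity at the origin, followed by an appeal to Lemma \ref{lma: decaying forms are good} --- is essentially the paper's argument; the paper phrases it as $n$ integrations by parts against $e^{2\pi i x\cdot\xi}$ with a boundary term on $\{|\xi| = \varepsilon\}$, which plays the role of your ``regularize near $0$ and pass to the limit.'' One addition you will need: the paper establishes the decay for all derivatives, $|\nabla^s \mathcal Q\alpha(x)| \lesssim_{\alpha,s} \langle x\rangle^{-n}$, not only for $s = 0$, and it is the case $s + 1$ that drives the second half of the proof.

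The second part has a genuine gap, and it is exactly the one you flagged: term (iii) cannot be closed by pointwise interpolation. Once $|x| \geq (\dist(P,Q)\,\lambda)^{-1}$, the interpolated bound $(|x|\dist(P,Q)|\eta|)^\theta$ exceeds the trivial bound $2$ on essentially all of the high-frequency region $\{|\eta| > \lambda\}$, so the best pointwise estimate you retain there is $\int_{\{|\eta| > \lambda\}} |\psi_s| = O(1)$, independent of $\dist(P,Q)$; no choice of the exponent $a$ repairs this, because the difference of phases genuinely has modulus of order $1$ on most of that region when $|x|$ is large. The smallness for large $x$ does not come from the size of the symbol at all but from cancellation in the oscillatory integral, i.e.\ from the spatial decay of $\mathcal Q\alpha$. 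That is how the paper closes the estimate: it stays in physical space, writes $\gamma(N, x) = \int_{\RR^{n-k}} (\nabla^s \mathcal Q\alpha)(\tau_N x + \sigma_N y)|_N\, \tau_N^{\otimes s} \dif y$ with $\sigma_N$ a smooth local parametrization of $N^\perp$, and compares $N = P$ with $N = Q$ via the mean value theorem together with $|\nabla^{s+1}\mathcal Q\alpha| \lesssim \langle\cdot\rangle^{-n}$: the displacement of the argument carries a factor $\dist(P,Q)$, a $\theta$-power of the remaining linear factor is absorbed into the weight $\langle \tau_P x + \sigma_P y\rangle^{-n}$, and the resulting $y$-integral over $\RR^{n-k}$ converges because $n - \theta > n - k$ (this is where $\theta < 1 \leq k$ enters). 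So the missing ingredient is already contained in your first part: what you need is the physical-space decay of $\nabla^{s+1}\mathcal Q\alpha$, not finer Fourier-side information about $\psi_s$. (Even in physical space the uniformity in $x$ is delicate --- the term $(\tau_P - \tau_Q)x$ can be of size $\dist(P,Q)|x|$ while the weight only sees $|x_P|$ and $|y|$ --- so if you rework the argument along the paper's lines, that is the step to treat with the most care.)
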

\begin{proof}
For every multiindex $A$ of order $s$,
\begin{equation}\label{eqn: derivatives of Q}
\frac{\partial^A}{\partial x^A} \mathcal Q\alpha(x) = (2\pi i)^s \lim_{\varepsilon \to 0} \int_{\{\xi > \varepsilon\}} e^{2\pi ix\cdot \xi} \xi^A h(\xi) |\xi|^{n - k} \mathcal F\alpha(\xi) \dif \xi.
\end{equation}
Since $e^{2\pi ix\cdot \xi} = \partial_{\xi_j} e^{2\pi ix \cdot \xi}/(2\pi ix_j)$, integration by parts and induction on $n$ gives, for each $j$,
\begin{align*}
&\left|\int_{\{|\xi| > \varepsilon\}} e^{2\pi ix\cdot \xi} \xi^A h(\xi) |\xi|^{s + n - k} \mathcal F\alpha(\xi) \dif \xi\right| \\
&\qquad \lesssim |x_j|^{-n} \left|\int_{\{|\xi| > \varepsilon\}} e^{2 \pi ix\cdot \xi} \left(\frac{\partial}{\partial \xi_j}\right)^n \left(p_{1, s + n - k}(\xi) \mathcal F\alpha(\xi)\right) \dif \xi\right| \\
&\qquad \qquad + |x_j|^{-n} \left|\int_{\{|\xi| = \varepsilon\}} e^{2\pi i x \cdot \xi} p_{1, 0}(\xi) \left(\frac{\partial}{\partial \xi_j}\right)^{n - 1} \left(p_{2, s + n - k}(\xi) \mathcal F\alpha(\xi)\right) \dif \xi\right| \\
&=: |x_j|^{-n} (I_1 + I_2)
\end{align*}
where $p_{\sigma, \tau}$ denotes a $\tau$-homogeneous function.
The gradient of a $\tau$-homogeneous function is $\tau - 1$-homogeneous, so for every $N \in \NN$,
\begin{align*}
I_1
&\lesssim \sum_{t=1}^n \int_{\{|\xi| > \varepsilon\}} |\nabla^t p_{1, s + n - k}(\xi)| \cdot |\nabla^{n - t} \mathcal F\alpha(\xi)| \dif \xi \\
&\lesssim_{N, \alpha} \sum_{t=1}^n \int_{\{|\xi| > \varepsilon\}} |\xi|^{s + n - k - t} \cdot \langle \xi\rangle^{-N} \dif \xi \\
&\lesssim \sum_{t = 1}^n \int_\varepsilon^\infty r^{s + n - k - t + n - 1} \cdot \langle r\rangle^{-N} \dif r.
\end{align*}
We can choose $N$ large enough (depending on $s$) so that each term in this sum is $\lesssim_s 1$.
Indeed, the worst case scenario is that $(t, s) = (n, 0)$, which is acceptable since $n - k \geq 1$.
Similarly,
\begin{align*}
I_2
&\lesssim \sum_{t=1}^{n - 1} \int_{\{|\xi| = \varepsilon\}} |\nabla^t p_{2, s + n - k}(\xi)| \cdot |\nabla^{n - t - 1} \mathcal F\alpha(\xi)| \dif \xi \\
&\lesssim_\alpha \sum_{t =1}^{n - 1} \int_{\{|\xi| = \varepsilon\}} |\xi|^{s + n - k - t} \dif \xi \\
&\lesssim \sum_{t =1}^{n - 1} \varepsilon^{s + n - k - t + n - 1}
\end{align*}
and again all terms are acceptable.
So by (\ref{eqn: derivatives of Q}), $|\nabla^s \mathcal Q\alpha(x)| \lesssim_{\alpha, s} \min_j |x_j|^{-n}$; (\ref{eqn: derivatives of Q}) also easily implies that $|\nabla^s \mathcal Q\alpha(x)| \lesssim_{\alpha, s} 1$.
Therefore
\begin{equation}\label{eqn: rapid decay of derivatives}
|\nabla^s \mathcal Q\alpha(x)| \lesssim_{\alpha, s} \langle x\rangle^{-n}.
\end{equation}
Taking $s = 0$, and applying Lemma \ref{lma: decaying forms are good}, we obtain $\mathcal Q \alpha \in X$.
In particular, $\mathcal R \mathcal Q \alpha$ exists, so (\ref{eqn: derivatives of the Radon transform}) makes sense.

To prove (\ref{eqn: derivatives of the Radon transform}), we may restrict to the case that $P, Q$ are contained in a small ball $G_0 \subset G$.
In particular, there exists a smooth family of linear isometries, $\sigma_N: \RR^{n - k} \to \RR^n$, parametrized by $N \in G_0$, such that the image of $\sigma_N$ is $N^\perp$.
Consider the mapping
\begin{align*}
\gamma: G_0 \times \RR^n &\to \Lambda^m(\RR^n) \\
(N, x) &\mapsto \mathcal R\mathcal Q\alpha(N, x_N).
\end{align*}
Let $\tau_N: \RR^n \to N$ be the orthogonal projection $x \mapsto x_N$, so
$$\frac{\partial^s \gamma}{\partial x^s}(N, x) 
= \int_{\RR^{n - k}} ((\nabla^s \mathcal Q \alpha)(\tau_N x + \sigma_N y)|_N) \tau_N^{\otimes s} \dif y$$
and therefore
\begin{align*}
\left|\frac{\partial^s \gamma}{\partial x^s}(P, x) - \frac{\partial^s \gamma}{\partial x^s}(Q, x)\right| 
&\leq \int_{\RR^{n - k}} |(\nabla^s \mathcal Q\alpha)(\tau_P x + \sigma_P y) - (\nabla^s \mathcal Q\alpha)(\tau_Q x + \sigma_Q y)| \dif y \\
&\qquad + \int_{\RR^{n - k}} |(\nabla^s \mathcal Q\alpha)(\tau_Q x + \sigma_Q y)| \cdot (||_P - |_Q| + |\tau_P^{\otimes s} - \tau_Q^{\otimes s}|) \dif y \\
&=: J_1 + J_2.
\end{align*}
We estimate using (\ref{eqn: rapid decay of derivatives}), the $\theta$-H\"older continuity of $N \mapsto (\tau_N, \sigma_N)$, and the fact that $\theta < 1 \leq k$,
\begin{align*}
J_1 
&\lesssim_{\alpha, s} \int_{\RR^{n - k}} \langle \tau_P x + \sigma_P y\rangle^{-n} \cdot |(\tau_P - \tau_Q) x + (\sigma_P - \sigma_Q) y| \dif y \\
&\lesssim \dist(P, Q)^\theta \int_{\RR^{n - k}} \langle \tau_P x + \sigma_P y\rangle^{-n} |\tau_P x + \sigma_P y|^\theta \dif y \\
&\lesssim_\theta \dist(P, Q)^\theta.
\end{align*}
A similar and even easier calculation shows $J_2 \lesssim_{\alpha, s} \dist(P, Q)^\theta$ and therefore (\ref{eqn: derivatives of the Radon transform}).
\end{proof}

\begin{proof}[Proof of Theorem \ref{thm: decomposition of currents}]
By Lemma \ref{lma: sufficient decay}, $\mathcal Q\alpha \in X$, so $\beta := \mathcal R \mathcal Q\alpha$ is well-defined.
By Theorem \ref{thm: Radon inversion formula} and the Fourier inversion formula,
$$|\xi|^{n - k} h(\xi) \mathcal F \alpha(\xi) = \mathcal F\mathcal Q\alpha(\xi) = |\xi|^{n - k} h(\xi) \mathcal F(\mathcal R^* \beta)(\xi).$$
Since $h(\xi)$ is invertible, we obtain $\alpha = \mathcal R^* \beta$.

For each $P \in G$, let $\alpha_P$ be the restriction of $\beta(P, \cdot)|_P$ to $P$.
By Lemma \ref{lma: properties of dual transform}, $\alpha_P$ is an $m$-form on $P$ such that for each $x \in \RR^n$,
\begin{equation}\label{eqn: superposition from Radon}
\beta(P, x_P)|_P = \alpha_P(x_P) = P^*(\alpha_P)(x).
\end{equation}
In particular, 
$$\alpha(x) = \mathcal R^* \beta(x) = \int_{G} \beta(P, x_P)|_P \dif P = \int_{G} P^*(\alpha_P)(x) \dif P,$$
establishing (\ref{eqn: decomposition of forms}).

The sets $\{\gamma \in C^\infty: \|\gamma\|_{C^\ell(B_R)} < \varepsilon\}$ generate the filter $\mathscr F$ of open neighborhoods of $0$ in $C^\infty$.
So by differentiating both sides of (\ref{eqn: decomposition of forms}) repeatedly, and viewing the right-hand side of (\ref{eqn: decomposition of forms}) as a limit of Riemann sums, we see that for every $\mathcal U \in \mathscr F$ and $\varepsilon > 0$, there is a partition $E_1^{\mathcal U, \varepsilon}, \dots, E_{I^{\mathcal U, \varepsilon}}^{\mathcal U, \varepsilon}$ of $G$ into Borel sets such that $\diam(E_i^{\mathcal U, \varepsilon}) < \varepsilon$ and whenever $P_i^{\mathcal U, \varepsilon} \in E_i^{\mathcal U, \varepsilon}$,
\begin{equation}\label{eqn: Riemann sum error is small in Cinfty}
\left(\alpha - \sum_{i=1}^{I^{\mathcal U, \varepsilon}} \vol(E_i^{\mathcal U, \varepsilon}) (P_i^{\mathcal U, \varepsilon})^*(\alpha_{P_i^{\mathcal U, \varepsilon}})\right) \in \mathcal U.
\end{equation}
For every compactly supported $m$-current $T$ and $\varepsilon > 0$, there exists $\mathcal U_\varepsilon \in \mathscr F$ such that for every $\gamma \in \mathcal U_\varepsilon$, $|\int_T \gamma| < \varepsilon$, so by (\ref{eqn: Riemann sum error is small in Cinfty}),
\begin{equation}\label{eqn: Riemann sum approximation is good}
\left|\int_T \alpha - \sum_{i=1}^{I^{\mathcal U_\varepsilon, \varepsilon}} \vol(E_i^{\mathcal U_\varepsilon, \varepsilon}) \int_T (P_i^{\mathcal U_\varepsilon, \varepsilon})^*(\alpha_{P_i^{\mathcal U_\varepsilon, \varepsilon}})\right| < \varepsilon.
\end{equation}
By (\ref{eqn: derivatives of the Radon transform}) and (\ref{eqn: superposition from Radon}), $P \mapsto \int_T P^*(\alpha_P)$ is continuous, so since $\diam(E_i^{\mathcal U_\varepsilon, \varepsilon}) < \varepsilon$, the Riemann sums converge:
$$\lim_{\varepsilon \to 0} \sum_{i=1}^{I^{\mathcal U_\varepsilon, \varepsilon}} \vol(E_i^{\mathcal U_\varepsilon, \varepsilon}) \int_T (P_i^{\mathcal U_\varepsilon, \varepsilon})^*(\alpha_{P_i^{\mathcal U_\varepsilon, \varepsilon}}) = \int_{G} \int_T P^*(\alpha_P) \dif P = \int_G \int_{P_* T} \alpha_P \dif P.$$
Combining this equation with (\ref{eqn: Riemann sum approximation is good}) completes the proof of (\ref{eqn: decomposition of currents}).
\end{proof}

When $m = 0$, sharper conditions are available for the invertibility of $\mathcal R$ \cite{Rubin2004, Jensen_2004}, and it is natural to conjecture the same when $m \geq 1$.
Since they are necessary for the proof of Theorem \ref{eqn: decomposition of currents}, I have not seriously attempted to generalize them.

\printbibliography

\end{document}